\def\titlerunning#1{\gdef\titrun{#1}}
\def\author#1{\gdef\autrun{\def\and{\unskip, }#1}\gdef\@author{#1}}
\def\address#1{{\def\and{\\\hspace*{15.6pt}}\renewcommand{\thefootnote}{}\footnote{#1}}\markboth{\autrun}{\titrun}}
\def\subjclass#1{\par\bigskip\noindent\textbf{Mathematics Subject Classification 2020.} #1}
\def\keywords#1{\par\smallskip\noindent\textbf{Keywords.} #1}
\newenvironment{dedication}{\itshape\center}{\par\medskip}
\newenvironment{acknowledgments}{\bigskip\small\noindent\textit{Acknowledgments.}}{\par}
\newtheorem{thm}{Theorem}
\theoremstyle{definition}
\newtheorem*{rem}{Remark}
\numberwithin{equation}{section}
\begin{document}

\titlerunning{Trace formulas}

\title{\textbf{Trace formulas for the modified Mathieu equation}}

\author{Leon A. Takhtajan}

\date{}

\maketitle

\address{Leon  A. Takhtajan:  leontak@math.stonybrook.edu\and
Department of Mathematics, Stony Brook University, Stony Brook, NY 11794-3651, USA \and
Euler International Mathematical Institute, Pesochnaya Nab. 10, Saint Petersburg 197022, Russia}

\begin{dedication}
To my friend Ari Laptev on the occasion of his 70th birthday
\end{dedication}

\begin{abstract}
For the radial and one-dimensional Schr\"{o}dinger operator $H$ with growing potential $q(x)$ we outline a method of
obtaining the trace identities --- an asymptotic expansion of the Fredholm determinant $\mathrm{det}_{F}(H-\lambda I)$ as $\lambda\to-\infty$.  As an illustrating example, we consider
Schr\"{o}dinger operator with the potential $q(x)=2\cosh 2x$,  associated with the modified Mathieu equation.
\subjclass{Primary 47E05; Secondary 34E05}
\keywords{Fredholm determinant, Green-Liouville method, Mathieu equation, radial and one-dimensional Schr\"{o}dinger operator, Riccati equation, trace identities}
\end{abstract}

\newcommand{\re}{\mathrm{Re}}
\newcommand{\tr}{\mathrm{Tr}}
\newcommand{\del}{\partial}
\newcommand{\delb}{\bar\partial}
\newcommand{\delp}{\partial^\prime}
\newcommand{\delpp}{\partial^{\prime\prime}}
\newcommand{\deltap}{\d}
\newcommand{\deltapp}{\delta}
\newcommand{\eqdef}{\overset{\text{def}}{=}}
\newcommand{\ptheta}{\check{\theta}}
\newcommand{\vk}{\varkappa}
\newcommand{\var}{\boldsymbol{\delta}}
\newcommand{\T}{\mathfrak{T}}
\newcommand{\Pro}{\mathfrak{P}}
\newcommand{\D}{\mathfrak{D}}
\newcommand{\ihalf}{\frac{i}{2}}
\newcommand{\qc}{\mathcal{Q}\mathcal{C}(\Gamma,\tilde\Gamma)}
\newcommand{\qsubgp}{\stackrel{\Q}{<}}
\newcommand{\RR}{{\mathbb{R}}}
\newcommand{\NN}{{\mathbb{N}}}
\newcommand{\ZZ}{{\mathbb{Z}}}
\newcommand{\Q}{{\mathbb{Q}}}
\newcommand{\C}{{\mathbb{C}}}
\newcommand{\al}{\alpha}
\newcommand{\be}{\beta}
\newcommand{\de}{\delta}
\newcommand{\g}{\gamma}
\newcommand{\s}{\gamma}
\newcommand{\ka}{\kappa}
\newcommand{\epi}{<_{e}}
\newcommand{\vp}{\varphi}
\newcommand{\N}{\mathbb{N}}
\newcommand{\cl}{\overline}
\newcommand{\bk}{\backslash}
\newcommand{\Ga}{\Gamma}
\newcommand{\Gaf}{\Gamma_F}
\newcommand{\Gas}{\Gamma_S}
\newcommand{\Gr}{\mathrm{SL}(2,\mathbb{R})}
\newcommand{\Gc}{\mathrm{SL}(2,\mathbb{C})}
\newcommand{\Gu}{\mathrm{SU}(1,1)}
\newcommand{\pa}{\partial}
\newcommand{\la}{\langle}
\newcommand{\ra}{\rangle}
\newcommand{\bpa}{\overline{\partial}}
\newcommand{\Ka}{K\"{a}hler\:}
\newcommand{\Te}{Teichm\"{u}ller\:}
\newcommand{\Ac}{S_{\Gamma}\,}
\newcommand{\bu}{\bullet}
\newcommand{\ov}{\overline}
\newcommand{\ep}{\epsilon}
\newcommand{\vep}{\varepsilon}
\newcommand{\gr}{\mathrm{\mathfrak{sl}}(2,\mathbb{R})}
\newcommand{\z}{\bar{z}}
\newcommand{\op}{e^{\frac{t}{2}H}}
\newcommand{\ac}{\int_{X} \phi_{z}\phi_{\cl{z}}+e^{\phi}d^2z}
\newcommand{\ma}[4]{(\begin{smallmatrix}
              #1 & #2 \\ #3 & #4
             \end{smallmatrix})}
\newcommand{\intf}{\int_{-\infty}^{\infty}}
\newcommand{\curly}[1]{\mathscr{#1}}
\newcommand{\cA}{\curly{A}}
\newcommand{\cB}{\curly{B}}
\newcommand{\cC}{\curly{C}}
\newcommand{\cD}{\curly{D}}
\newcommand{\cE}{\curly{E}}
\newcommand{\cF}{\curly{F}}
\newcommand{\cG}{\curly{G}}
\newcommand{\cH}{\curly{H}}
\newcommand{\cI}{\curly{I}}
\newcommand{\cJ}{\curly{J}}
\newcommand{\cK}{\curly{K}}
\newcommand{\cL}{\curly{L}}
\newcommand{\cM}{\curly{M}}
\newcommand{\cN}{\curly{N}}
\newcommand{\cO}{\curly{O}}
\newcommand{\cP}{\curly{P}}
\newcommand{\cQ}{\curly{Q}}
\newcommand{\cR}{\curly{R}}
\newcommand{\cS}{\curly{S}}
\newcommand{\cT}{\curly{T}}
\newcommand{\cU}{\curly{U}}
\newcommand{\cV}{\curly{V}}
\newcommand{\cW}{\curly{W}}
\newcommand{\cX}{\curly{X}}
\newcommand{\cY}{\curly{Y}}
\newcommand{\cZ}{\curly{Z}}

\section{Introduction} Mathieu functions, solutions of the Mathieu equation and modified Mathieu equation, have many applications in mathematics and theoretical physics. In particular, the modified Mathieu differential equation
$$\frac{d^{2}\psi}{dx^{2}}+(\lambda-2\cosh 2x)\psi=0$$
on the real axis $ -\infty<x<\infty$, is a Schr\"{o}dinger equation 
$$H\psi=\lambda\psi$$ 
with the Hamiltionian 
\begin{equation}\label{H-M}
H=-\frac{d^{2}}{dx^{2}} +2\cosh 2x
\end{equation}
in the Hilbert space $L^{2}(\RR)$.

A positive, self-adjoint operator $H$ given by \eqref{H-M} is a Hamiltonian of the quantum two-particle periodic Toda chain (after separation of the center of mass), and has a purely discrete spectrum. As its classical analog, the quantum $N$-particle periodic Toda chain is also completely integrable, as was shown by M. Gutzwiller \cite{G} for $N=2,3$ and by E.K. Sklyanin \cite{Sklyanin} for general $N$. For $N=2$ the exact quantization condition in \cite{G} was obtained by applying Floquet theory to the potential $q(x)=2\cosh 2x$ with pure imaginary period, and is formulated in terms of infinite Hill determinants (see \cite[Ch. XIX]{WW}). Sklyanin's method of quantum separation of variables was developed further by Q. Pasquier and M. Gaudin \cite{PG}, and by S. Kharchev and D. Lebedev \cite{KL}.

Recently, N.A. Nekrasov and S.L. Shatashvili \cite{NS} found a remarkable connection between quantum integrable systems and $\mathcal{N}=2$ four-dimensional supersymmetric Yang-Mills theories in the special $\Omega$-background. Among other interesting results they showed, at the physical level of rigor, that exact quantization conditions for the quantum $N$-particle periodic Toda chain can be written in terms of the so-called effective twisted superpotential for the low energy effective two-dimensional gauge theory. In particular, Nekrasov and Shatashvili gave a new interpretation of Gutzwiller and Kharchev and Lebedev quantization conditions.

Replacing the kinetic term $\bm{P}^{2}$ in the Hamiltonian $H$ by $2\cosh \bm{P}$, where $\bm{P}=-i\dfrac{d}{dx}$ is the quantum-mechanical momentum operator, one gets the following functional-difference operator on $L^{2}(\RR)$,
\begin{equation}\label{H-mirror}
\hat{H}=2\cosh \bm{P}+2\cosh 2\bm{Q},
\end{equation}
where $\bm{Q}$ is the quantum-mechanical position operator.
The operator $\hat{H}$ is a positive self-adjoint operator on $L^{2}(\RR)$ with a purely discrete spectrum, and $\hat{H}^{-1}$ is a trace class operator.
Operators of such type naturally appear in the theory of topological strings as a quantization of mirror curves of non-compact Calabi-Yau threefolds, called ``topological strings/spectral theory (TS/ST) duality''. These operators have a purely discrete spectrum, and their Fredholm determinants 
\begin{equation}\label{F-det}
\mathrm{det}_{F}(\hat{H}-\lambda I)=\frac{\det(\hat{H}-\lambda I)}{\det\hat{H}}=\prod_{n=1}^{\infty}\left(1-\frac{\lambda}{\lambda_{n}}\right),
\end{equation}
where $\lambda_{n}$ are the eigenvalues,
are conjectured to encode deep relations in the enumerative geometry of the corresponding Calabi-Yau manifolds. We refer the reader to M. Mari\~{n}o comprehensive survey \cite{Mar} for the precise description of the method, formulation of the main conjectures and references.  Developing this method, A. Grassi, J. Gu and M. Mari\~{n}o  in \cite{Mar2} conjectured an exact formula for the Fredholm determinant of the modified Mathieu operator in terms of the so-called ``exact quantum periods''.
Though in certain special cases TS/ST conjectures can be confirmed by numerical computations, their mathematical derivation seems to be out of reach. 

Nevertheless, it is quite remarkable that operators of the type 
\eqref{H-mirror} can be rigorously studied by the method developed by Ari Laptev more then twenty years ago in \cite{Laptev}. Namely, it was proved in \cite{LST} (see also \cite{LST-2}) that these functional-difference operators have a purely discrete spectrum and their eigenvalue counting function satisfies the Weyl's law.

The logarithmic derivative of the Fredholm determinant $a(\lambda)=\mathrm{det}_{F}(\hat{H}-\lambda I)$ is
$$-\frac{d}{d\lambda}\log a(\lambda)=\tr(\hat{H}-\lambda I)^{-1}=\sum_{n=1}^{\infty}\frac{1}{\lambda-\lambda_{n}}.$$
As $\lambda\to-\infty$, it admits the asymptotic expansion
$$\frac{d}{d\lambda}\log a(\lambda)=\alpha_{0}(\nu)+\sum_{n=1}^{\infty}\frac{c_{n}}{\nu^{n}}+O(|\nu|^{-\infty}),$$
where $\nu=\sqrt{-\lambda}$ and the function $\al_{0}(\nu)$ is determined by the asymptotic of the eigenvalues 
(cf. \cite[Theorem 3.1]{D}). The coefficients $c_{2k}$ represent regularized divergent series $\sum_{n=1}^{\infty}\lambda_{n}^{k}$ and it is expected that they are expressed in terms of the potential of the functional-difference operator $\hat{H}$. Relations of such type are called \emph{trace identities}.

The trace identities for the Sturm-Liouville operators were derived in the classic papers by I.M. Gelfand and B.M. Levitan \cite{G-L} and L.A. Dikii \cite{D},  for the radial Schr\"{o}dinger operator with rapidly decaying potential $q(x)$ --- by V.S. Buslaev and L.D. Faddeev in \cite{B-F}, and for the one-dimensional Schr\"{o}dinger operator with rapidly decaying potential $q(x)$ --- by L.D. Faddeev and V.E. Zakharov in \cite{F-Z} (see also \cite{LT}).  

Though the trace identities for growing as $|x|\to\infty$  potential $q(x)$ have not been explicitly considered in the literature, a closed topic --- semiclassical expansion in a complex domain, illustrated by the case of the homogeneous quartic harmonic oscillator --- was investigated by A. Voros in \cite{Voros1, Voros2}.
One should also mention the relative trace identities --- the asymptotic
expansion of
$$\tr\left\{\left(-\frac{d^{2}}{dx^{2}}+q(x)+r(x)-\lambda I\right)^{-1}-\left(-\frac{d^{2}}{dx^{2}}+q(x)-\lambda I\right)^{-1}\right\}$$
as $\lambda\to-\infty$, where potential $q(x)$ grows as $|x|\to\infty$,  and function $r(x)$ is rapidly decaying. The interesting example  $q(x)=x^{2}$ was considered by A. Pushnitski and I. Sorrell in \cite{P}.

In the present paper we outline a simple method for the derivation of the trace identities for the Schr\"{o}dinger operator with the potential $q(x)$ that rapidly grows to infinity as $|x|\to\infty$, and illustrate it by explicit formulas for the case $q(x)=2\cosh 2x$, the modified Mathieu operator \eqref{H-M}.  In Sect. \ref{LG} we briefly review the Liouville-Green method, which is used to obtain the trace identities for the radial Schr\"{o}dinger operator  in Sect. \ref{half}, and for the one-dimensional Schr\"{o}dinger operator in Sect. \ref{line}. In Section \ref{E} we illustrate our method on two examples: a well-known radial Schr\"{o}dinger operator with potential $q(x)=e^{2x}$ in Sect. \ref{half-line}, and modified Mathieu operator \eqref{H-M} in Sect. \ref{cosh-line-0}. Our main result --- the trace identities for the modified Mathieu operator --- is presented in Theorem \ref{main-thm}. Finally, in Section \ref{E-value} we prove that modified Mathieu differential equation has a solution that behaves like the modified Bessel function of the second kind as $x\to\infty$.

\section{General case}
Here we consider the differential equation 
\begin{equation}\label{e-value-1}
-\psi''+ q(x)\psi=\lambda\psi
\end{equation}
one the half-line $0<x<\infty$ and on the real line $\infty<x<\infty$, where $q(x)$ is a positive smooth function that grows to infinity as $x\to\infty$ (or $|x|\to\infty$).
\subsection{The Liouville-Green method} \label{LG}

When $\lambda<0$, 
decaying as $x\to\infty$ solution of \eqref{e-value-1} has the following double asymptotic
\begin{equation}\label{chi-1}
\psi_{1}(x,\lambda)=\frac{C_{1}(\lambda)}{\sqrt[4]{q(x)-\lambda}}\,e^{-\int_{0}^{x}\sqrt{q(s)-\lambda}\,ds}(1+\vep_{1}(x,\lambda)),
\end{equation}
where in may cases
$$|\vep_{1}(x,\lambda)|\leq\frac{\tau(x)}{|\lambda|},\quad \tau(x)\to 0\quad\text{as}\quad x\to\infty.$$
Similarly, solution of \eqref{e-value-1} that grows as $x\to\infty$ has the asymptotic
\begin{equation}\label{chi-2}
\psi_{2}(x,\lambda)=\frac{C_{2}(\lambda)}{\sqrt[4]{q(x)-\lambda}}\,e^{\int_{0}^{x}\sqrt{q(s)-\lambda}\,ds}(1+\vep_{2}(x,\lambda)).
\end{equation}
This follows from the classical Liouville-Green method, developed by F. Olver \cite{Olver} (see also \cite{Fed}).

Put $Q(x,\lambda)=q(x)-\lambda$ and introduce
\begin{equation}\label{chi-3}
\chi(x,\lambda)=\log\psi_{1}(x,\lambda)+\frac{1}{4}\log Q(x,\lambda)+\int_{0}^{x}\sqrt{Q(s,\lambda)}\,ds-\log C_{1}(\lambda)
\end{equation}
and
\begin{equation}\label{sigma-1}
\sigma(x,\lambda)=\chi'(x,\lambda)=\frac{\psi_{1}'(x,\lambda)}{\psi_{1}(x,\lambda)}+\frac{Q'(x,\lambda)}{4Q(x,\lambda)}+\sqrt{Q(x,\lambda)},
\end{equation}
where prime stands for the $x$-derivative. Denoting for brevity $\sigma=\sigma(x,\lambda)$ and $Q=Q(x,\lambda)$, we see that the
function $\sigma$ satisfies the Riccati equation
\begin{align*}
\sigma' &=\frac{\psi_{1}''}{\psi_{1}}-\left(\frac{\psi_{1}'}{\psi_{1}}\right)^{2}+ \frac{Q''}{4Q}-\frac{1}{4}\left(\frac{Q'}{Q}\right)^{2} +\frac{Q'}{2\sqrt{Q}}\\
&=Q-\left(\sigma- \frac{Q'}{4Q}-\sqrt{Q}\right)^{2}+\frac{Q''}{4Q}-\frac{1}{4}\left(\frac{Q'}{Q}\right)^{2} +\frac{Q'}{2\sqrt{Q}},
\end{align*}
or 
\begin{equation}\label{Ric}
\sigma'=-\sigma^{2} +2\sqrt{Q}\sigma+\frac{Q'}{2Q}\sigma + \frac{Q''}{4Q}-\frac{5}{16}\left(\frac{Q'}{Q}\right)^{2}.
\end{equation}
\subsection{Trace identities}
The Riccati equation \eqref{Ric} can be used to obtain an asymptotic expansion of $\sigma(x,\lambda)$ as $\lambda\to-\infty$ and get the trace identities
for the Schr\"{o}dinger operator with growing as $x\to\infty$ potential $q(x)$
on $L^{2}(0,\infty)$ and on $L^{2}(\RR)$. In the latter case potential $q(x)$ is assumed to be an even function.

\subsubsection{Radial Schr\"{o}dinger operator}\label{half}
Here we consider the Schr\"{o}dinger operator $H$ on $L^{2}(0,\infty)$ with a smooth positive potential $q(x)$ satisfying 
\begin{equation} \label{q-I}
\int_{0}^{\infty}\frac{dx}{\sqrt{q(x)}}<\infty,
\end{equation}
supplemented with the boundary condition $\psi(0)=0$. Restricting further, we consider potentials satisfying the inequality $q(x)\geq Cx^{2+\vep}$ for some $\vep>0$. It follows from the Weyl's law that $H^{-1}$ is a trace class operator,
and its Fredholm determinant is an entire function of order $1/2$.

For every $\lambda$ equation  \eqref{e-value-1} for $0<x<\infty$ has a unique solution $\psi(x,\lambda)$ with the asymptotic 
\begin{equation}\label{x-as}
\psi(x,\lambda)=\frac{1}{\sqrt[4]{q(x)}}\,e^{-\int_{0}^{x}\sqrt{q(s)}\,ds}(1+o(1))\quad\text{as}\quad x\to\infty.
\end{equation}
For fixed $x$ solution $\psi(x,\lambda)$ is an entire function of $\lambda$ of order $1/2$ and $\psi(0,\lambda)$ is proportional to the Fredholm determinant
$a(\lambda)$ of the operator $H$,
$$a(\lambda)=\frac{\psi(0,\lambda)}{\psi(0,0)}.$$
Comparing equations \eqref{chi-1} and \eqref{x-as} for $\lambda<0$, we see that for such $\lambda$ solution $\psi(x,\lambda)$ has asymptotic \eqref{chi-1} with
$$C_{1}(\lambda)=e^{\int_{0}^{\infty}\big(\sqrt{Q(x,\lambda)}-\sqrt{q(x)}\big)dx}.$$
It follows from \eqref{q-I} that the integral in this formula is convergent, so $C_{1}(\lambda)$ is well-defined.

Putting in formula \eqref{chi-3} $\lambda =-\nu^{2}$, we have
\begin{gather*}
\chi(x,\lambda)=\log\psi(x,\lambda)+\frac{1}{4}\log(q(x)+\nu^{2})+\int_{0}^{x}\sqrt{q(s)+\nu^{2}}\,ds\\
-\int_{0}^{\infty}\Big(\sqrt{q(s)+\nu^{2}}-\sqrt{q(s)}\Big)ds.
\end{gather*}
From here we obtain
\begin{align*}
\chi(0,\lambda) & =\log a(\lambda) -a_{0}(\lambda),\\
\lim_{x\to\infty}\chi(x,\lambda) &=0,
\end{align*}
where
$$a_{0}(\lambda)= -\frac{1}{4}\log(q(0)+\nu^{2})+\int_{0}^{\infty}\Big(\sqrt{q(x)+\nu^{2}}-\sqrt{q(x)}\Big)dx -\log\psi(0,0)$$
(cf. \cite[Appendix A]{Voros2}).
It is not difficult to show that $a_{0}(\lambda)$ 
admits an asymptotic expansion as $\lambda=-\nu^{2}\to\infty$,
\begin{equation}\label{alpha-0-0}
a_{0}(\lambda)=\alpha(\nu)+\sum_{n=1}^{\infty}\frac{\al_{n}}{\nu^{n}} + O(|\nu|^{-\infty}),
\end{equation}
where the leading term --- a function $\al(\nu)$ --- is determined by the asymptotic of the eigenvalues and can be computed explicitly (see examples in Section \ref{E}). 

The function $\sigma(x,\lambda)=\chi^{\prime}(x,\lambda)$ satisfies the Riccati equation \eqref{Ric}, which admits
the asymptotic solution
\begin{equation}\label{c-series}
\sigma(x,\lambda)=\sum_{n=1}^{\infty}\frac{c_{n}(x)}{\nu^{n}}+ O(|\nu|^{-\infty}),
\end{equation}
where the coefficients $c_{n}(x)$ are determined recursively in terms of the potential $q(x)$ and its derivatives. Since
\begin{equation}\label{sigma-chi}
\log\chi(0,\lambda)=-\int_{0}^{\infty}\sigma(x,\lambda)dx,
\end{equation}
we have
\begin{equation}\label{a-asym-2}
\log a(\lambda)=\alpha_{0}(\lambda) 
-\int_{0}^{\infty}\sigma(x,\lambda)dx.
\end{equation}
Using the asymptotic expansion \eqref{c-series},
we obtain the trace identities
\begin{equation}\label{trace-1}
\log a(\lambda)=\alpha(\nu)+\sum_{n=1}^{\infty}\frac{c_{n}}{\nu^{n}}+O(|\nu|^{-\infty}),\quad\text{where}\quad c_{n}=\alpha_{n}-\int_{0}^{\infty}c_{n}(x)dx.
\end{equation}

\subsubsection{One-dimensional Schr\"{o}dinger operator} \label{line} Here we assume that the potential $q(x)$ is a smooth even function satisfying \eqref{q-I}. A fundamental system of solutions of the differential equation \eqref{e-value-1} is given by solutions $\psi_{1}(x,\lambda)$ and $\psi_{2}(x,\lambda)$ with the following asymptotic as $x\to\infty$,
\begin{align}
\psi_{1}(x,\lambda) & =\frac{1}{\sqrt[4]{q(x)}}\,e^{-\int_{0}^{x}\sqrt{q(s)}\,ds}(1+o(1)),\label{x-as-1}\\
\psi_{2}(x,\lambda) & =\frac{1}{\sqrt[4]{q(x)}}\,e^{\int_{0}^{x}\sqrt{q(s)}\,ds}(1+o(1)). \label{x-as-2}
\end{align}
Assuming that 
$$\lim_{x\to\infty}\frac{q'(x)}{\big(\sqrt{q(x)}\big)^{3}}=0,$$
we get from \eqref{x-as-1} and \eqref{x-as-2} that
$$W(\psi_{1},\psi_{2})=2,$$
where $W(f,g)=fg'-f'g$ is the Wronskian of two functions. The functions $\psi_{1}(x,\lambda)$ and $\psi_{2}(x,\lambda)$ satisfy asymptotic formulas \eqref{chi-1}--\eqref{chi-2}, where
$$C_{1}(\lambda)=e^{\int_{0}^{\infty}
\big(\sqrt{Q(x,\lambda)}-\sqrt{q(x)}\big)dx}\quad\text{and}\quad C_{2}(\lambda)=e^{-\int_{0}^{\infty}\big(\sqrt{Q(x,\lambda)}-\sqrt{q(x)}
\big)dx}.$$

Another fundamental system of solutions is given by the functions $\psi_{1}(-x,\lambda)$ and $\psi_{2}(-x,\lambda)$, and we have
\begin{equation}\label{-+-0}
\psi_{1}(x,\lambda)=t_{11}(\lambda)\psi_{1}(-x,\lambda)+t_{12}(\lambda)\psi_{2}(-x,\lambda),
\end{equation}
where
\begin{equation}\label{trans}
t_{12}(\lambda)=\frac{1}{2}W(\psi_{1}(x,\lambda),\psi_{1}(-x,\lambda)).
\end{equation}
For fixed $x$ the functions $\psi_{1}(x,\lambda)$ and $\psi_{2}(x,\lambda)$ are entire functions of order $1/2$, as it can be shown using condition \eqref{q-I}. Therefore, $t_{12}(\lambda)$ is an entire function
of order $1/2$ with zeros at the eigenvalues of the operator $H$. As in the previous section, for the Fredholm determinant $a(\lambda)$ of the operator $H$ we obtain
$$a(\lambda)=\frac{t_{12}(\lambda)}{t_{12}(0)}.$$ 

It follows from \eqref{chi-1} that the function  $\chi(x,\lambda)$, defined by \eqref{chi-3}, satisfies
$$\lim_{x\to\infty}\chi(x,\lambda)=0.$$
To investigate its behavior as $x\to-\infty$ we observe that it follows from \eqref{x-as-1}--\eqref{x-as-2} that as $x\to-\infty$ the first term in  \eqref{-+-0} is exponentially small with respect to the second term, so
$$\lim_{x\to-\infty}(\log\psi_{1}(x,\lambda)-\log\psi_{2}(-x,\lambda))=\log t_{12}(\lambda).$$
Since the antiderivative $\int_{0}^{x}\sqrt{Q(s,\lambda)}ds$ is an odd function of $x$, we obtain
$$\lim_{x\to-\infty}\chi(x,\lambda)=\log a(\lambda)-a_{0}(\lambda),$$
where now
$$a_{0}(\lambda)= \int_{-\infty}^{\infty}\Big(\sqrt{q(x)+\nu^{2}}-\sqrt{q(x)}\Big)dx -\log t_{12}(0).$$
As in Sect. \ref{half}, the function  $a_{0}(\lambda)$ admits an asymptotic expansion of the type \eqref{alpha-0-0}. The corresponding Riccati equation \eqref{Ric}
has an asymptotic solution 
$$\sigma(x,\lambda)=\sum_{n=1}^{\infty}\frac{c_{n}(x)}{\nu^{n}}+ O(|\nu|^{-\infty}),$$
where the coefficients $c_{n}(x)$ are determined recursively and are expressed in terms of the potential $q(x)$ and its derivatives.	
Thus we obtain the trace identities
\begin{equation}\label{trace-2}
\log a(\lambda)=\alpha(\nu)+\sum_{n=1}^{\infty}\frac{c_{n}}{\nu^{n}} +O(|\nu|^{-\infty}),\quad\text{where}\quad
c_{n}=\alpha_{n}-\int_{-\infty}^{\infty}c_{n}(x)dx.
\end{equation}
\begin{rem} The interesting example of the harmonic oscillator --- the potential $q(x)=x^{2}$ --- does not satisfy condition \eqref{q-I}. However, one can modify the outlined here method by exploiting the substitution $x=\sqrt{-\lambda}t$.\footnote{This is the substitution used in \cite{Olver} to obtain uniform asymptotic for Weber functions (parabolic cylinder functions).} In this case the Fredholm determinant is
$$a(\lambda)=\frac{2^{-\frac{\lambda}{2}}\sqrt\pi}{\Gamma(\frac{1-\lambda}{2})}$$
and corresponding trace identities give an alternate derivation of the Stirling asymptotic expansion for the Euler gamma-function (for  an equivalent approach, see \cite[Part IV]{Voros1}).
\end{rem} 
\section{Examples} \label{E}
\subsection{Potential $q(x)=e^{2x}$ on the half-line} \label{half-line}

It is well-known that the differential equation 
$$-\psi''+e^{2x}\psi=\lambda\psi$$
has two linearly independent solutions,  the modified Bessel function of the first kind $I_{\nu}(e^{x})$, and the modified Bessel function of the second kind $K_{\nu}(e^{x})$, where $\nu=\sqrt{-\lambda}$. 
For fixed $\nu$ and real $y\to\infty$ we have the asymptotic
\begin{equation} \label{K-z}
K_{\nu}(y)=\sqrt{\frac{\pi}{2y}}e^{-y}\left(1+O(y^{-1})\right),
\end{equation}
so the eigenvalues determined by the boundary condition $\psi(0)=0$ are the zeros $\{\lambda_{n}\}_{n=1}^{\infty}$ of an entire function $K_{\nu}(1)$, where $\lambda=-\nu^{2}$. The eigenvalues $\lambda_{n}$ are positive, simple and  accumulate to infinity, and 
\begin{equation}\label{a-D}
a(\lambda)=\frac{K_{\nu}(1)}{K_{0}(1)}.
\end{equation} 
\begin{rem} It is well-known (see, e.g., \cite{Polya}) that the total number of zeros of $K_{ik}(1)$ with $0<k<T$ is
$$\frac{T}{\pi}\log\frac{2T}{e}+O(1),$$
so for the eigenvalue counting function $N(\lambda)$ we obtain
$$N(\lambda)=\frac{\sqrt{\lambda}}{\pi}\log\frac{2\sqrt\lambda}{e}+O(1).$$
This also easily follows from the Weyl's law. Thus as $n\to\infty$ for the $n$-th eigenvalue $\lambda_{n}$ we have
$$\lambda_{n}\simeq\left(\frac{2\pi n}{eW(\frac{2\pi n}{e})}\right)^{2},$$
where $W(x)$ is the Lambert function, $W(x)e^{W(x)}=x$ for $x>0$. 
\end{rem}
 The asymptotic expansion of $\log a(\lambda)$ as $\lambda=-\nu^{2}\to-\infty$ can be easily obtained from the well-known asymptotic of the modified Bessel function of the second kind\footnote{One needs to put $z=1/\nu$ and $p=\nu/\sqrt{1+\nu^{2}}$ in formula (7.17) in \cite[Ch.10, \S7]{Olver}.}. However, it can also be derived directly, using the method outlined in Sect. \ref{half}.
 
Namely,  
we have
\begin{gather*}
\int_{0}^{x}\sqrt{e^{2s}+\nu^{2}}\,ds=\sqrt{\nu^{2}+e^{2x}}+\nu x-\nu\log\left(\nu+ \sqrt{\nu^{2}+e^{2x}}\right)\\ -\sqrt{\nu^{2}+1}+\nu\log\left(\nu+ \sqrt{\nu^{2}+1}\right).
\end{gather*}
Since
\begin{gather*}
\sqrt{\nu^{2}+e^{2x}}+\nu x-\nu\log\left(\nu+ \sqrt{\nu^{2}+e^{2x}}\right) = e^{x} + O(e^{-x})\quad\text{as}\quad x\to\infty,
\end{gather*}
comparison with \eqref{chi-1} gives $K_{\nu}(e^{x})=\psi_{1}(x,\lambda)$,
where
$$C_{1}(\lambda)=\sqrt{\frac{\pi}{2}}\,e^{\sqrt{\nu^{2}+1}-\nu\log\left(\nu- \sqrt{\nu^{2}+1}\right)}.$$
Of course, this is a well-known asymptotic of the modified Bessel function of the second kind,
\begin{equation} \label{K-unif}
K_{\nu}(e^{x})=\sqrt{\frac{\pi}{2}}\frac{e^{-\{\sqrt{\nu^{2}+e^{2x}}+\nu x-\nu\log(\nu+\sqrt{\nu^{2}+e^{2x}})\}}}{\sqrt[4]{\nu^{2}+e^{2x}}}(1+O(|\nu|^{-1})),
\end{equation}
which is uniform for $-\infty<x< \infty$ (see  \cite[Ch. 10, \S7]{Olver}).

Now the function $\chi(x,\lambda)$, defined in \eqref{chi-3}, satisifes
\begin{align*}
\lim_{x\to\infty}\chi(x,\lambda) &=0,\\
\chi(0,\lambda) &=\log a(\lambda)-a_{0}(\lambda),
\end{align*}
where
$$a_{0}(\lambda)=\sqrt{\nu^{2}+1}-\nu\log\left(\nu+\sqrt{\nu^{2}+1}\right)-\frac{1}{4}\log(\nu^{2}+1)+\frac{1}{2}\log\frac{\pi}{2}-\log K_{0}(1).$$
The function $a_{0}(\lambda)$ admits an asymptotic expansion \eqref{alpha-0-0}, where
$$\al_{0}(\lambda)=\nu\log(2\nu)-\nu-\frac{1}{2}\log\nu+\frac{1}{2}\log\frac{\pi}{2}-\log K_{0}(1).$$
This formula agrees with the asymptotic 
\begin{equation} \label{K-nu}
K_{\nu}(e^{x})=\sqrt{\frac{\pi}{2\nu}}\left(\frac{e^{x+1}}{2\nu}\right)^{\!-\nu}(1+O(|\nu|^{-1}))
\end{equation}
for fixed $x$ and $\nu\to\infty$, and with the Weyl's law.

The function $\sigma(x,\lambda)$, defined in \eqref{sigma-1}, satisfies the Riccati equation \eqref{Ric}. Its asymptotic expansion \eqref{c-series} can be easily obtained by using the substitution
$\tau(t,\lambda)=\sigma(t+\log\nu,\lambda)$, which transforms \eqref{Ric} into the equation
\begin{equation}\label{Ric-2}
\tau'=-\tau^{2}+2\nu\sqrt{1+e^{2t}}\tau +\frac{e^{2t}}{1+e^{2t}}\tau+\frac{4e^{2t}-e^{4t}}{4(1+e^{2t})^{2}}.
\end{equation}
We have
$$\tau(t,\nu)=\sum_{n=1}^{\infty}\frac{c_{n}(t)}{\nu^{n}},$$
where
$$c_{1}(t)=\frac{e^{4t}-4e^{2t}}{8(1+e^{2t})^{5/2}}$$
and recursively
\begin{equation*}
2\sqrt{1+e^{2t}}c_{n+1}(t)=c_{n}'(t)-\frac{e^{2t}}{1+e^{2t}}c_{n}(t)+\sum_{k=1}^{n}c_{k}(t)c_{n-k}(t).
\end{equation*}
Plugging this into \eqref{a-asym-2}, we obtain
$$\log a(\lambda)=\nu\log(2\nu)-\nu-\frac{1}{2}\log\nu+\frac{1}{2}\log\frac{\pi}{2}+\sum_{n=1}^{\infty}\frac{\alpha_{n}-c_{n}(\nu)}{\nu^{n}} +O(|\nu|^{-\infty}),$$
where
$$c_{n}(\nu)=-\int_{-\log\nu}^{\infty}\sigma_{n}(t)dt.$$
Moreover, it can be shown (cf. \cite[Ch.10, \S7]{Olver}) that the coefficients $c_{n}(\nu)$ are polynomials in $\nu/\sqrt{\nu^{2}+1}$, which gives the asymptotic expansion of $\log a(\lambda)$. 

\subsection{Potential $q(x)=2\cosh 2x$ on the line}\label{cosh-line-0}
Here we consider the modified Mathieu differential equation
\begin{equation} \label{S-cosh}
-\psi''+2\cosh 2x\,\psi=\lambda\psi,\quad -\infty<x<\infty.
\end{equation}

It is well-known that the antiderivative of the function
$\sqrt{2\cosh 2x-\lambda}$ is expressed in terms of elliptic integrals. Namely,  let
$$F(\vp,k)=\int_{0}^{\vp}\frac{d\theta}{\sqrt{1-k^{2}\sin^{2}\theta}}\quad\text{and}\quad E(\vp,k)=\int_{0}^{\vp}\sqrt{1-k^{2}\sin^{2}\theta}\,d\theta$$
be, correspondingly, elliptic integrals of the first and second kinds, where $0<k^{2}<1$ and $-\pi/2<\vp<\pi/2$. Putting $\lambda=2-\nu^{2}$, where $\nu>0$, we have for $\nu>2$ (see \cite[formula (22) on p. 136]{GR})
\begin{gather}
\int_{0}^{x}\sqrt{\nu^{2}-2+2\cosh 2s}\,ds \nonumber\\ 
=\nu(F(\vp,k)-E(\vp,k))+\tanh\frac{x}{2}\,\sqrt{\nu^{2}-2+2\cosh 2x}, \label{F-E}
\end{gather}
where
\begin{equation}\label{k-phi}
\vp=\sin^{-1}\left(\tanh\frac{x}{2}\right)\quad\text{and}\quad k^{2}=\frac{\nu^{2}-4}{\nu^{2}}.
\end{equation}
The corresponding solutions $\psi_{1}(x,\lambda)$  and $\psi_{2}(x,\lambda)$ of the differential equation \eqref{S-cosh} with asymptotics \eqref{chi-1}--\eqref{chi-2}
are
\begin{align}
\psi_{1}(x,\lambda) & \simeq\frac{C_{1}(\lambda)}{\sqrt[4]{2\cosh 2x +\nu^{2}}}e^{\nu(F(\vp,k)-E(\vp,k))+\tanh\frac{x}{2}\,\sqrt{\nu^{2}-2+2\cosh 2x}},\label{1}\\
\psi_{2}(x,\lambda) & \simeq \frac{C_{2}(\lambda)}{\sqrt[4]{2\cosh 2x +\nu^{2}}}e^{-\nu(F(\vp,k)-E(\vp,k))-\tanh\frac{x}{2}\,\sqrt{\nu^{2}-2+2\cosh 2x}},\label{2}
\end{align}
where $\lambda=2-\nu^{2}\to-\infty$ (cf. \cite{Grig}).

Since $\vp\to\pi/2$ as $x\to\infty$, we have
$$\int_{0}^{x}\sqrt{\nu^{2}-2+2\cosh 2s}\,ds=\nu(\bm{K}(k)-\bm{E}(k)))+e^{x}+O(e^{-x}),$$
where $\bm{K}(k)$ and $\bm{E}(k)$ are, respectively, complete elliptic integrals of the first and second kinds.
Choosing the constants in \eqref{1}--\eqref{2} as
\begin{equation}\label{C-1-2}
C_{1}(\lambda)=\sqrt\frac{\pi}{2}\,e^{-\nu(\bm{K}(k)-\bm{E}(k))}\quad\text{and}\quad C_{2}(\lambda)=\sqrt\frac{1}{2\pi}\,e^{\nu(\bm{K}(k)-\bm{E}(k))},
\end{equation}
we get a solution $\psi_{1}(x,\lambda)$ with the same asymptotic as $x\to\infty$ as the modified Bessel function of the second kind $K_{i\sqrt\lambda}(e^{x})$,
$$\psi_{1}(x,\lambda)=\sqrt\frac{\pi}{2e^{x}}\,e^{-e^{x}}(1+O(e^{-x}))\quad\text{as}\quad x\to\infty,$$
and a solution $\psi_{2}(x,\lambda)$ with the same asymptotic as $x\to\infty$ as 
the modified Bessel function of the first kind $I_{i\sqrt\lambda}(e^{x})$, 
$$\psi_{2}(x,\lambda)=\sqrt\frac{1}{2\pi e^{x}}\,e^{e^{x}}(1+O(e^{-x}))\quad\text{as}\quad x\to\infty.$$

We have $W(\psi_{1},\psi_{2})=1$ and 
\begin{equation}\label{-+-2}
\psi_{1}(x,\lambda)=t_{11}(\lambda)\psi_{1}(-x,\lambda)+ t_{12}(\lambda)\psi_{2}(-x,\lambda),
\end{equation}
where
\begin{equation}\label{trans-2}
t_{12}(\lambda)=W(\psi_{1}(x,\lambda),\psi_{1}(-x,\lambda))
\end{equation}
is an entire function of order $1/2$ with zeros --- the eigenvalues $\lambda_{n}$ of the Schr\"{o}dinger operator \eqref{H-M}. Corresponding Fredholm determinant is
$$a(\lambda)=\frac{t_{12}(\lambda)}{t_{12}(0)}.$$

Let $\chi(x,\lambda)$ be the function defined in \eqref{chi-3}.
Since the antiderivative \eqref{F-E}--\eqref{k-phi} is an odd function of $x$, as in Sect. \ref{line} we get
\begin{align}
\lim_{x\to\infty}\chi(x,\lambda) & = 0,\label{a-1}\\
\lim_{x\to-\infty}\chi(x,\lambda) &= \log a(\lambda)+\log\pi+\log t_{12}(0)-2\nu(\bm{K}(k)-\bm{E}(k)).\label{a-2}
\end{align}

We state the main result of this section.
\begin{thm} \label{main-thm} Fredholm determinant $a(\lambda)$ of the Schr\"{o}dinger operator \eqref{H-M} admits the following asymptotic expansion as $\lambda=2-\nu^{2}\to-\infty$
 \begin{equation} \label{a-as-cosh-0}
\log a(\lambda)=2\nu(\bm{K}(k)-\bm{E}(k))-\log\pi-\log t_{12}(0)+\sum_{n=1}^{\infty}\frac{c_{n}}{\nu^{n}} +O(|\nu|^{-\infty}),
\end{equation}
where $k^{2}=1-4\nu^{-2}$ and the coefficients $c_{n}$ are determined explicitly.
\end{thm}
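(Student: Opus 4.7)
The plan is to apply the general recipe of Sect.~\ref{line} to $q(x)=2\cosh 2x$. Since $\sigma=\chi'$, integrating the limits \eqref{a-1}--\eqref{a-2} over $\RR$ gives the exact identity
\begin{equation*}
\log a(\lambda)\;=\;2\nu(\bm{K}(k)-\bm{E}(k))-\log\pi-\log t_{12}(0)-\int_{-\infty}^{\infty}\sigma(x,\lambda)\,dx,
\end{equation*}
so the theorem reduces to an asymptotic expansion of $\int_{-\infty}^{\infty}\sigma\,dx$ in inverse powers of $\nu$.

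To produce the expansion I would substitute the formal series $\sigma(x,\lambda)=\sum_{n\geq 1}c_n(x)/\nu^n$ into the Riccati equation \eqref{Ric} with $Q=\nu^{2}-2+2\cosh 2x$, after expanding $\sqrt{Q}=\nu+(\cosh 2x-1)/\nu+O(\nu^{-3})$ and the rational pieces $Q'/(2Q)$, $Q''/(4Q)$, $(Q'/Q)^{2}$ in powers of $1/\nu$. Matching powers of $1/\nu$ yields a triangular, purely algebraic recursion with $c_{1}=c_{2}=0$, $c_{3}(x)=-\cosh 2x$, and in general polynomial expressions in $\cosh 2x,\sinh 2x$ for all $n\geq 3$.

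The main obstacle is the one already met in Sect.~\ref{half-line}: these coefficients $c_{n}(x)$ grow exponentially as $|x|\to\infty$, so $\int_{-\infty}^{\infty}c_{n}(x)\,dx$ diverges and a term-by-term integration of the naive series is illegitimate. The formal expansion of $\sigma$ is only uniform in the regime $|x|\ll\log\nu$ and must be regularized before integration. Two natural routes work here. The first, parallel to Sect.~\ref{half-line}, is to exploit the evenness of $q$ to write $\int_{-\infty}^{\infty}=2\int_{0}^{\infty}$ and apply the Olver-type substitution $x=t+\log\nu$; this absorbs the exponential growth into the large parameter and produces shifted coefficients that are rational in $\nu/\sqrt{\nu^{2}+e^{2t}}$ and integrable on $(-\log\nu,\infty)$. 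The second route, more natural here, is to change variables to the elliptic coordinate $\vp=\sin^{-1}(\tanh(x/2))$ suggested by the antiderivative \eqref{F-E}--\eqref{k-phi}, which maps $\RR$ onto the bounded interval $(-\pi/2,\pi/2)$; in this coordinate the Riccati equation is a regular singular perturbation whose asymptotic coefficients are smooth in $\vp$ and analytic in the modulus $k$, and whose integrals over $(-\pi/2,\pi/2)$ express the scalar $c_{n}$ explicitly via $\bm{K}(k),\bm{E}(k)$ and their $k$-derivatives.

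The hardest step is the rigorous justification of the $O(|\nu|^{-\infty})$ error in \eqref{a-as-cosh-0}: writing $\sigma=\sum_{n=1}^{N}c_n(x)/\nu^n+R_N(x,\lambda)$, one needs $\int R_N\,dx=O(\nu^{-N-1})$ uniformly on the integration domain. I would obtain this by a contraction-mapping argument for the regularized Riccati equation in the elliptic (or shifted) variable, following the Liouville-Green error bounds of Olver \cite{Olver} and Fedoryuk \cite{Fed}.
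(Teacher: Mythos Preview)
Your overall architecture matches the paper: from \eqref{a-1}--\eqref{a-2} you correctly extract the exact identity
\[
\log a(\lambda)=2\nu(\bm{K}(k)-\bm{E}(k))-\log\pi-\log t_{12}(0)-\int_{-\infty}^{\infty}\sigma(x,\lambda)\,dx,
\]
and you correctly diagnose that the naive coefficients $c_n(x)$ of the $1/\nu$-expansion grow like $\cosh 2x$ and are not integrable. Where you diverge from the paper is in the cure.

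The paper does not use either of your two substitutions. It introduces the $\nu$-\emph{dependent} hyperbolic change of variable $\sinh x=\dfrac{\nu}{2}\sinh y$, chosen so that
\[
Q(x,\lambda)=\nu^{2}+4\sinh^{2}x=\nu^{2}\cosh^{2}y
\]
holds \emph{exactly}. Then $\sqrt{Q}=\nu\cosh y$ with no remainder, and the transformed Riccati equation \eqref{Ric-tau-2} for $\tau(y,\lambda)=\sigma(x,\lambda)$ has inhomogeneous terms that are bounded rational functions of $\tanh y$ and $1/\cosh y$; the recursion produces coefficients $\tau_{n}(y)$ that decay like $1/\cosh y$ and are integrable over $\RR$. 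The Jacobian $dx/dy=(1-k^{2}/\cosh^{2}y)^{-1/2}$ carries all the residual $\nu$-dependence through $k^{2}=1-4\nu^{-2}$, and its binomial expansion generates only finitely many terms at each order. This is what makes the $c_{n}$ genuinely ``explicit''.

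Of your two proposals, the shift $x=t+\log\nu$ could be pushed through, but it is tailored to a single exponential and leaves the $e^{-2x}$ piece of $\cosh 2x$ as an awkward $e^{-2t}/\nu^{2}$ perturbation; the recursion and the handling of the moving endpoint $t=-\log\nu$ are messier than in the pure $e^{2x}$ case. Your second proposal, the $\nu$-independent Gudermannian variable $\vp=\sin^{-1}(\tanh(x/2))$, does not do the job you claim: because $\vp$ carries no $\nu$, the quantity $Q$ still mixes $\nu^{2}$ with a function blowing up like $\cos^{-4}\vp$ at the endpoints, and the $1/\nu$-expansion of $\sqrt{Q}$ is \emph{not} uniform on $(-\pi/2,\pi/2)$. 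The assertion that ``the asymptotic coefficients are smooth in $\vp$'' fails precisely at $\vp=\pm\pi/2$, so this route does not regularize the integral. The missing idea is that the change of variable must scale with $\nu$ so as to flatten $Q$ to $\nu^{2}$ times a fixed function of the new variable; the paper's $\sinh x=(\nu/2)\sinh y$ is exactly that.

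Finally, your concern about a rigorous $O(|\nu|^{-\infty})$ remainder is legitimate; the paper, like you, leaves this at the level of ``straightforward'' and does not supply a contraction argument.
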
 

\begin{proof} 
It follows from \eqref{a-1}--\eqref{a-2} that 
\begin{equation}\label{a-main}
\log a(\lambda) =2\nu(\bm{K}(k)-\bm{E}(k))-\log\pi-\log t_{12}(0)-\int_{-\infty}^{\infty}\sigma(x,\lambda)dx,
\end{equation}
where $\sigma(x,\lambda)$ is defined in \eqref{sigma-1} and satisfies the Riccati equation \eqref{Ric}. Using 
$$2\cosh 2x-\lambda=\nu^{2}+ 2\cosh 2x-2=\nu^{2}+4\sinh^{2}x,$$
we can rewrite the equation \eqref{Ric} as
\begin{gather*}
\frac{d\sigma}{dx}=-\sigma^{2}+2\sqrt{\nu^{2}+4\sinh^{2}x}\,\sigma +\frac{2\sinh 2x}{\nu^{2}+4\sinh^{2}x}\sigma\\
+\frac{8\cosh 2x}{\nu^{2}+4\sinh^{2}x}-5\left(\frac{\sinh 2x}{{\nu^{2}+4\sinh^{2}x}}\right)^{2}.
\end{gather*}
It is convenient to change variables by
$\sinh x=\dfrac{\nu}{2}\sinh y$,  
so
$$\nu^{2}+4\sinh^{2}x=\nu^{2}\cosh^{2}y$$ 
and
\begin{gather*}
\frac{dx}{dy}=\frac{\nu\cosh y}{2\cosh x}=\frac{\nu\cosh y}{\sqrt{4+\nu^{2}\sinh^{2}y}}
=\left(1-\frac{k^{2}}{\cosh^{2}y}\right)^{\!\!-\frac{1}{2}}.
\end{gather*}
Next, introduce the function $\tau(y,\lambda)=\sigma\left(\sinh^{-1}(\frac{\nu}{2}\sinh y), \lambda\right)$, so
\begin{equation}\label{sigma-tau}
\int_{-\infty}^{\infty}\sigma(x,\lambda)dx=\int_{-\infty}^{\infty}\tau(y,\lambda)\left(1-\frac{k^{2}}{\cosh^{2}y}\right)^{\!\!-\frac{1}{2}}dy.
\end{equation}
The Riccati equation for  $\tau(y,\lambda)$ takes the form
\begin{gather}
\left(1-\frac{k^{2}}{\cosh^{2}y}\right)^{\!\!\frac{1}{2}}\frac{d\tau}{dy}=-\tau^{2}+2\nu\cosh y\,\tau +\tanh y\left(1-\frac{k^{2}}{\cosh^{2}y}\right)^{\!\!\frac{1}{2}}\tau \nonumber\\
+4\tanh^{2}y-\frac{5}{4}\tanh^{4}y+\frac{1}{\nu^{2}\cosh^{2}y}(8-\frac{5}{4}\tanh^{2}y).\label{Ric-tau-2}
\end{gather}
Now it is straightforward to show that that equation \eqref{Ric-tau-2} 
admits an asymptotic solution 
$$\tau(y,\lambda)=\sum_{n=1}^{\infty}\frac{\tau_{n}(y)}{\nu^{n}},$$
where 
$$\tau_{1}(y)=-\frac{1}{2\cosh y}(4\tanh^{2}y-\frac{5}{4}\tanh^{4}y).$$
The coefficients $\tau_{n}(y)$ are obtained recursively using the expansion
$$\left(1-\frac{k^{2}}{\cosh^{2}y}\right)^{\!\!\frac{1}{2}} =\sum_{m=1}^{\infty}(-1)^{m}\binom{\frac{1}{2}}{m}\frac{k^{2m}}{\cosh^{2m}y}$$
and the binomial expansion for $k^{2m}=(1-4\nu^{-2})^{m}$; only finitely many terms from these expansions are needed in order to get the $n$-th term. Substituting the asymptotic expansion for $\tau(y,\lambda)$ into \eqref{sigma-tau} and \eqref{a-main} and using 
$$\left(1-\frac{k^{2}}{\cosh^{2}y}\right)^{\!\!-\frac{1}{2}} =\sum_{m=1}^{\infty}(-1)^{m}\binom{-\frac{1}{2}}{m}\frac{k^{2m}}{\cosh^{2m}y},$$
we get the desired asymptotic expansion \eqref{a-as-cosh-0}. 
\end{proof}
\begin{rem}
Since
$$\bm{K}(k)=\log\frac{4}{\sqrt{1-k^{2}}}+o(1)\quad\text{and}\quad \bm{E}(k)=1+o(1)\quad\text{as}\quad k\to 1,$$
we have as $\nu\to\infty$,
\begin{equation*}
\nu(\bm{K}(k)-\bm{E}(k))=\nu\log(2\nu)-2\nu +o(1).
\end{equation*}
Here the $o(1)$ term --- the remainder --- is of the form $f_{1}(\nu^{-1})+f_{2}(\nu^{-1})\log\nu$, where
$f_{1}(x)$ and $f_{2}(x)$ are convergent for $|x|<1$ power series in $x^{2}$ that vanish at $x=0$
(see \cite[p. 54]{Cayley}).
\end{rem}
\section{Existence of the solution $\psi_{1}(x,\lambda)$} \label{E-value}
 
Here for the convenience of the reader we prove that differential equation \eqref{S-cosh} has a solution $\psi_{1}(x,\lambda)$ which asymptotically as $x\to\infty$ behaves like the modified Bessel function of the second kind.
Namely, we have the following result.
\begin{thm} Modified Mathieu equation \eqref{S-cosh} has a solution $\psi_{1}(x,\lambda)$ with the following asymptotic
$$\psi_{+}(x,\lambda) =K_{ik}(e^{x})(1+o(1)) \quad\text{as}\quad x\to\infty,\quad\text{where}\quad \lambda=k^{2}.$$
For fixed $x$ the function  $\psi_{1}(x,\lambda)$ is entire of order $1/2$.
\end{thm}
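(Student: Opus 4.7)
The plan is to realize $\psi_1(x,\lambda)$ as the solution of a Volterra integral equation obtained by viewing the modified Mathieu equation as a perturbation of the modified Bessel equation. Splitting the potential $2\cosh 2x = e^{2x} + e^{-2x}$, I would rewrite \eqref{S-cosh} as
$$-\psi'' + (e^{2x}-\lambda)\psi = -e^{-2x}\psi.$$
The unperturbed operator admits $K_{ik}(e^x)$ and $I_{ik}(e^x)$ as fundamental system, with $\lambda = k^2$ and $x$-Wronskian equal to $-1$. Variation of parameters, with boundary data fixed at $+\infty$, then yields
\begin{equation*}
\psi_1(x,\lambda) = K_{ik}(e^x) + \int_x^\infty \bigl[I_{ik}(e^x)K_{ik}(e^y) - K_{ik}(e^x)I_{ik}(e^y)\bigr]\, e^{-2y}\,\psi_1(y,\lambda)\,dy.
\end{equation*}

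Existence would be obtained by Picard iteration on a half-line $[x_0,\infty)$ with $x_0$ large. Writing $\psi_1 = K_{ik}(e^x) + R$ and using the standard Bessel asymptotics $K_{ik}(e^y)\sim\sqrt{\pi/(2e^y)}\,e^{-e^y}$ and $I_{ik}(e^y)K_{ik}(e^y)\sim 1/(2e^y)$, the inhomogeneous term in the equation for $R$ is seen to be of order $|K_{ik}(e^x)|\,e^{-3x}$; moreover, relative to the weight $|K_{ik}(e^x)|^{-1}$ the integral operator has operator norm $O(e^{-2x_0})$ on $[x_0,\infty)$, hence is a strict contraction once $x_0$ is large enough. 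The Banach fixed point produces $\psi_1$ on $[x_0,\infty)$ with the prescribed leading asymptotic, and the linear ODE extends it uniquely to all of $\RR$.

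Entireness in $\lambda$ rests on the observation that the kernel is an even function of $\nu = ik$, hence entire in $\nu^2 = -\lambda$. Substituting the identity $K_\nu = \pi(I_{-\nu}-I_\nu)/(2\sin\pi\nu)$ gives
\begin{equation*}
I_{ik}(e^x)K_{ik}(e^y) - K_{ik}(e^x)I_{ik}(e^y) = \frac{\pi}{2\sin\pi\nu}\bigl[I_\nu(e^x)I_{-\nu}(e^y) - I_{-\nu}(e^x)I_\nu(e^y)\bigr],
\end{equation*}
both factors being odd in $\nu$; the apparent singularities at $\nu\in\ZZ$ are removable (the bracket vanishes when $I_{-n} = (-1)^n I_n$). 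Since $K_{ik}(e^x)$ is itself entire in $\lambda$, each Picard iterate is entire, and the local uniformity in $\lambda$ of the contraction bound (guaranteed by the complex uniform Bessel asymptotics in \cite[Ch.~10, \S7]{Olver}) transfers entireness to the limit. The growth bounds $\log|K_\nu(e^x)|,\log|I_\nu(e^x)| = O(|\nu|\log|\nu|) = O(|\lambda|^{1/2}\log|\lambda|)$ for $|\lambda|\to\infty$ at fixed $x$ propagate through the Neumann series to give order at most $1/2$.

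The principal obstacle is making all kernel estimates uniform in $\lambda$ on complex neighborhoods that include the positive real axis, where $k$ becomes real and the Bessel functions oscillate: the naive real-variable asymptotics lose effective control there, and one must invoke Olver's uniform complex asymptotic expansions to keep the implied constants locally bounded as $\lambda$ varies. Everything else is routine once the kernel bounds are in place.
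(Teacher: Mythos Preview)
Your approach is essentially the same as the paper's: both treat \eqref{S-cosh} as the Bessel operator $-d^2/dx^2+e^{2x}$ perturbed by $e^{-2x}$, set up the identical Volterra integral equation (your kernel with $I_{ik}$ equals the paper's kernel with $I_{-ik}$, as the $K_\nu=\pi(I_{-\nu}-I_\nu)/(2\sin\pi\nu)$ substitution shows), and deduce entireness in $\lambda$ from the evenness of the kernel in $k$. The only technical difference is that the paper exploits the Volterra structure directly, proving by induction the explicit bound $|f_n(x,k)|\leq \dfrac{2^nC^{2n+1}}{3^n n!}\,e^{-e^x}e^{-(6n+1)x/2}$ for the iterates, so the $1/n!$ factor forces convergence on any half-line $[a,\infty)$ without needing to take $x_0$ large and then extend by the ODE; this also gives the sharper remainder $|\psi_1-K_{ik}(e^x)|\leq Ce^{-e^x}e^{-7x/2}$ in place of your $O(e^{-3x})$.
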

\begin{proof} We consider the Schr\"{o}dinger operator $H=-\dfrac{d^{2}}{dx^{2}}+2\cosh 2x$ as a perturbation  of the operator $H_{0}=-\dfrac{d^{2}}{dx^{2}}+e^{2x}$ by a small as $x\to\infty$ potential $e^{-2x}$. The operator $H_{0}$ has a simple absolutely continuous spectrum $[0,\infty)$, and for $\lambda\in\mathbb{C}\setminus [0,\infty)$ its resolvent $R_{\lambda}^{0}=(H_{0}-\lambda I)^{-1}$ is an integral operator in $L^{2}(\RR)$ with the integral kernel (see, e.g., \cite[Ch. 4.15]{Titchmarsh})
\begin{equation}\label{Res-0}
R_{\lambda}^{0}(x,y)=\begin{cases} I_{-ik}(e^{x})K_{ik}(e^{y}), &\quad x\leq y,\\
K_{ik}(e^{x})I_{-ik}(e^{y}),&\quad x\geq y.\end{cases}
\end{equation}
Here $k=\sqrt\lambda$ and $\mathrm{Im}\,k>0$. The operator $H_{0}$ has a Volterra type Green's function
$$G(x,y,k)= \begin{cases} I_{-ik}(e^{x})K_{ik}(e^{y})-K_{ik}(e^{x})I_{-ik}(e^{y}),&\quad x\leq y,\\
0, &\quad x>y.\end{cases}$$
Since 
$$K_{\nu}(z)=\frac{\pi}{2\sin\pi\nu}(I_{-\nu}(z)-I_{\nu}(z)),$$
$G(x,y,k)$ is an even function of $k$ and, therefore, is an entire function of $\lambda$ of order $1/2$.

The function $\psi_{1}(x,\lambda)$ satisfies the integral equation 
\begin{equation}\label{Qol}
\psi_{1}(x,\lambda)=K_{ik}(e^{x})+\int_{x}^{\infty}G(x,y,k)e^{-2y}\psi_{1}(y,\lambda)dy,
\end{equation}
which can be solved by successive approximations.
Indeed, put  $f_{0}(x,k)=K_{ik}(e^{x})$ and
$$f_{n}(x,k)=\int_{x}^{\infty}G(x,y,k)e^{-2y}f_{n-1}(y,k)dy.$$
Using the estimates
\begin{equation}\label{est-0}
|K_{ik}(e^{x})|\leq Ce^{-e^{x}}e^{-\frac{1}{2}x}\quad\text{and}\quad |I_{ik}(e^{x})|\leq Ce^{e^{x}}e^{-\frac{1}{2}x}
\end{equation}
for $a\leq x<\infty$ (the constant $C$ depends on $a$), we have $f_{n}(x,-k)=f_{n}(x,k)$ and
\begin{equation}\label{est}
|f_{n}(x,k)|\leq \frac{2^{n}C^{2n+1}}{3^{n} n!}e^{-e^{x}}e^{-\frac{6n+1}{2}x},
\end{equation}
which can be easily proved by induction. Namely, \eqref{est} holds for $n=0$, and using the estimates \eqref{est-0}, we get
\begin{gather*}
|f_{n+1}(x,k)|\leq |I_{-ik}(e^{x})|\int_{x}^{\infty}e^{-2y}|K_{ik}(e^{y})f_{n}(y,k)|dy \\
+|K_{ik}(e^{x})|\int_{x}^{\infty}e^{-2y}|I_{-ik}(e^{y})f_{n}(y,k)|dy\\
\leq \frac{2^{n}C^{2n+3}}{3^{n} n!} \left[e^{e^{x}}e^{-\frac{1}{2}x}\int_{x}^{\infty}e^{-2y}e^{-2e^{y}}e^{-\frac{1}{2}y}e^{-\frac{6n+1}{2}y}dy\right.\\
\left.+e^{-e^{x}}e^{-\frac{1}{2}x}\int_{x}^{\infty}e^{e^{y}}e^{-\frac{1}{2}y}e^{-2y}e^{-e^{y}}e^{-\frac{6n+1}{2}y}dy\right]\\
\leq \frac{2^{n+1}C^{2n+3}}{3^{n+1} (n+1)!}e^{-e^{x}}e^{-\frac{6n+7}{2}x}.
\end{gather*}
Thus
$$\psi_{1}(x,\lambda)=\sum_{n=0}^{\infty}f_{n}(x,k)$$
is given by the absolutely convergent series and
$$|\psi_{1}(x,\lambda)-K_{ik}(e^{x})|\leq Ce^{-e^{x}}e^{-\frac{7}{2}x}.\qedhere$$
\end{proof}

As in Sect. \ref{cosh-line-0},  we have
$$t_{12}(\lambda)=W(\psi_{1}(x,\lambda),\psi_{1}(-x,\lambda))=-2\psi_{1}(0,\lambda)\psi'_{1}(0,\lambda),$$
and the eigenvalue problem \eqref{S-cosh}  is equivalent to two radial eigenvalue problems 
\begin{alignat*}{2}
-\psi'' +2\cosh 2x\,\psi & =k^{2}\psi, \quad 0<x<\infty, &\quad \psi(0)=0,\\
-\psi'' +2\cosh 2x\,\psi& =k^{2}\psi, \quad 0<x<\infty, &\quad \psi'(0)=0.
\end{alignat*}

\begin{acknowledgments}
It is a pleasure to thank Ari Laptev for the discussion of the trace identities and for drawing my attention to the reference \cite{P}. I am also grateful to Marcos Mari\~{n}o for the useful comments and for pointing to the references 
\cite{Mar2} and \cite{Voros1, Voros2}.
\end{acknowledgments}

\small


\begin{thebibliography}{99}
\bibitem{G} M.C. Gutzwiller, The quantum mechanical Toda lattice. \emph{Ann. Phys.} \textbf{124}, 347--381 (1980); The quantum mechanical Toda lattice II., \emph{Ann. Phys.} \textbf{133}, 304--331 (1981).
\bibitem{Sklyanin} E.K. Sklyanin, The quantum Toda chain. In \emph{Non-Linear Equations in Classical and Quantum Field Theory} (Paris 1984), Lecture Notes in Phys. 226, Springer, Berlin,196--233 (1985).
\bibitem{WW} E.T. Whittaker, G.N. Watson, \emph{A Course of Modern Analysis}, Cambridge University
Press, 1927.
\bibitem{PG} Q. Pasquier, M. Gaudin, The periodic Toda chain and a matrix generalization of the Bessel
function recursion relations. \emph{J. Phys.} \textbf{A25}, 5243--5252  (1992).
\bibitem{KL} S. Kharchev and D. Lebedev, Integral representation for the eigenfunctions of a quantum
periodic Toda chain. \emph{Lett. Math. Phys.} \textbf{50}, 53--77 (1999).
\bibitem{NS} N.A. Nekrasov, S.L. Shatashvili,  Quantization of Integrable Systems and Four Dimensional
Gauge Theories. In \emph{Proceedings of the 16th International Congress on Mathematical Physics} (Prague 2009), World Scientific, 265--289, (2010).
\bibitem{Mar} M. Mari\~{n}o, Spectral theory and mirror symmetry. In \emph{String-Math 2016} (Paris 2016), Proc. Sympos. Pure Math. 98, Amer. Math. Soc., Providence, RI, 259--294 (2018).
\bibitem{Mar2} A. Grassi, J. Gu and M. Mari\~{n}o. Non-perturbative approaches to the quantum Seiberg-Witten curve. \emph{J. High Energy Phys.} no. 7, 106, (2020).
\bibitem{Laptev} A. Laptev,  Dirichlet and Neumann eigenvalue problems on domains in Euclidean spaces. \emph{J. Funct. Anal.}  (2)\textbf{151}, 531--545  (1997).
\bibitem{LST} A. Laptev, L. Schimmer and L. Takhtajan, Weyl type asymptotics and bounds for the eigenvalues of functional-difference operators for mirror curves. \emph{Geometric and Functional Analysis (GAFA)} \textbf{26}:1, 288--305  (2016).
\bibitem{LST-2} A. Laptev, L. Schimmer and L. Takhtajan, Weyl asymptotics for perturbed functional difference operators. \emph{J. Math. Phys.} \textbf{60}, 103505 (2019).
\bibitem{D} L.A. Dikii, Trace formulas for Sturm-Liouville differential operators. \emph{Uspekhi Mat. Nauk} \textbf{13}:3(81),  111--143  (1958) (in Russian); English transl. \emph{Translations AMS Series 2}. \textbf{18} 81--115  (1958).
\bibitem{G-L} I.M. Gelfand, B.M. Levitan, On a simple identity for the eigenvalues of a second-order differential operator. \emph{Dokl. Akad. Nauk. USSR} \textbf{88}, 953--956 (1953) (in Russian).
\bibitem{B-F} V.S. Buslaev, L.D. Faddeev, Formulas for traces for a singular Sturm-Liouville differential operator. \emph{Dokl. Akad. Nauk SSSR} \textbf{132}:1, 13--16  (1960) (in Russian); English transl.  \emph{Soviet Math. Dokl.} \textbf{1}, 451--454 (1960).
\bibitem{F-Z}  V.E. Zakharov and L.D. Faddeev, Korteweg-de Vries equation: a completely integrable Hamiltonian system. \emph{ Funktsional'nyi Analiz i Ego Prilozheniya} \textbf{5}:4, 18--27 (1971) (in Russian); English transl. \emph{Funct. Anal. Appl.} \textbf{5}:4, 280--287 (1971).
\bibitem{LT} L.A. Takhtajan, Etudes of the resolvent. \emph{Uspekhi Mat. Nauk} \textbf{75}:1(451), 155--194  (2020) (in Russian); English transl. \emph{Russian Math. Surveys} \textbf{75}:1, 147--186  (2020).
\bibitem{Voros1} A. Voros, Spectre de l'\'{e}quation de Schr\"{o}dinger et m\'{e}thode BKW. \emph{Publications Math\'{e}matiques d'Orsay} \textbf{81}, (1982).
\bibitem{Voros2} A. Voros, The return of the quartic oscillator: the complex WKB method.
 \emph{Ann. Inst. H. Poincar\'{e} Sect. A (N.S.)}  \textbf{39}:3, 211--338  (1983).
 \bibitem{P} A. Pushnitski, I. Sorrell, High energy asymptotics and trace formulas for the perturbed harmonic oscillator. \emph{Ann. Henri Poincar\'{e}} \textbf{7} no. 2, 381--396 (2006). 
\bibitem{Olver} F. Olver, \emph{Asymptotics and special functions}, Academic Press, 1974.
\bibitem{Fed} M.V. Fedoryuk, \emph{Asymptotic Analysis: Linear Ordinary Differential Equations}, Springer 1993.
\bibitem{Polya} G. P\'{o}lya, Bemerkung \"{U}ber die Integraldarstellung der Riemannschen $\xi$-Funktion.\emph{ Acta Math.} \textbf{48}:3-4, 305--317 (1926).
\bibitem{GR} I.S. Gradshtein, I.M. Ryzhik, \emph{Tables Of Integrals, Series And Products}, 7th Ed, Academic Press, 2007.
\bibitem{Grig} N.S. Grigor'eva, \emph{Uniform asymptotic expansions of solutions of the Mathieu equation and the modified Mathieu equation}, 
Zap. Nauchn. Sem. LOMI, \textbf{62}, 60--91 (1976) (in Russian); English transl. \emph{J. Soviet Math.} \textbf{11}:5, 700--721 (1979).
\bibitem{Cayley} A. Cayley, \emph{An Elementary Treatise on Elliptic Functions}. George Bell and Sons, London, 1895; reprinted by Dover Publications, 1961.
\bibitem{Titchmarsh} E.C. Titchmarsh, \emph{Eigenfunction Expansions associated with Second-Order Differential Operators. Part I}. 2nd edition, Oxford University Press, London, 1962.
\end{thebibliography}
\end{document}